\newcommand\bigzero{\makebox(0,0){\text{\huge0}}}
\DeclareRobustCommand\nlab{\nu}
\theoremstyle{theorem}
\newtheorem{thm}{Theorem}[section]
\newtheorem{lem}[thm]{Lemma}
\title{\vspace*{-1.0cm}
       \vspace*{1cm}{\bf \Large{Local Existence of a Classical Solution\\for Quasi-Linear
Hyperbolic Systems}}}
\author{Shih-Wei Chou\footnote{E-mail: swchou@scu.edu.tw}
	     \\
	     {\small Department of Finance Engineering and Actuarial Mathematics, Soochow University,}\\{\small Taipei 100006, Taiwan}\\ \\
        Ying-Chieh Lin\footnote{E-mail: linyj@nuk.edu.tw}
         \\
         {\small Department of Applied Mathematics, National University of Kaohsiung,}\\{\small Kaohsiung 811726, Taiwan}\\
         \\
        Naoki Tsuge\footnote{E-mail: ntsuge@hiroshima-u.ac.jp}
         \\
         {\small School of Engineering, Graduate School of Advanced Science and Engineering, Hiroshima University,}\\{\small Higashihiroshima, 739-8527, Japan}\\
         \\
         }
\date{}
\begin{document}

\maketitle

In this article, we give a proof of the local existence of the classical solutions to the quasilinear system
\begin{equation}
\label{quasilinear}
u_{t}+A(x,t,u)u_{x}=h(x,t,u)
\end{equation}
with initial data
\begin{equation}
\label{ID}
u(x,0)=\bar{u}(x),\qquad x\in [a,b],
\end{equation}
where we assume 
\begin{itemize}
\item[(A)] Each matrix $A(x, t, u)$ has $n$ real distinct eigenvalues. The functions $A:\mathbb{R}_+\times \mathbb{R}\times\mathbb{R}^n\rightarrow\mathbb{R}^n\times\mathbb{R}^n$ and $h:\mathbb{R}_+\times \mathbb{R}\times\mathbb{R}^n\rightarrow\mathbb{R}^n$ are continuously differentiable with respect to all variables and their derivatives are locally Lipschitz continuous with respect to all variables. In addition, we assume $\bar{u}$ is continuously differentiable.
\end{itemize}
For simplicity, we further assume that $A(x,t,u)$ is a diagonal matrix, i.e.,
\begin{equation*}
A(x,t,u)=\left(
\begin{array}{ccc}
\lambda_1(x,t,u) &        &               \\
                 &        & \bigzero      \\
                 &\ddots  &               \\
\bigzero         &        &               \\
                 &        &\lambda_n(x,t,u) 
\end{array}
\right).
\end{equation*}
Then we have the following theorem.

\begin{thm}
\label{local_existence}
Suppose that $A$, $h$, and $\bar{u}$ satisfy the assumption (A). Then There exist constants $\Lambda,T>0$ and a continuously differentiable function $u$ which is the unique classical solution of \eqref{quasilinear}, \eqref{ID} on the domain
\begin{equation}
\label{D}
\mathcal{D}=\mathcal{D}_{\Lambda,T}:=\{(x,t):t\in [0,T],\ a+\Lambda t\le x\le b-\Lambda t\}.    
\end{equation}
\end{thm}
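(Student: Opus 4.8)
The plan is to combine the classical method of characteristics with a contraction-mapping (Picard) argument, carried out on the trapezoidal region $\mathcal D$ whose slanted sides encode finite speed of propagation. First I would fix the constants. Since $\bar u\in C^1([a,b])$, the graph $\{(x,0,\bar u(x)):x\in[a,b]\}$ is compact, so I choose a compact neighborhood $K\subset\mathbb{R}_+\times\mathbb R\times\mathbb R^n$ of it and set $\Lambda:=1+\max_i\sup_K|\lambda_i|$. On $K$ the functions $\lambda_i,h_i$ and their first derivatives are Lipschitz; call the common constant $L$, and put $M_0:=\|\bar u'\|_\infty$. The role of $\Lambda$ is that any backward $i$-characteristic issued from a point of $\mathcal D_{\Lambda,T}$ has speed at most $\Lambda$, hence stays inside $[a,b]\times[0,t]$ and, for $T$ small, inside $K$; this is precisely what makes $\mathcal D$ a domain of determinacy.

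Because $A$ is diagonal, the system decouples into the scalar transport equations $\partial_t u_i+\lambda_i\,\partial_x u_i=h_i$. For a candidate $v$ in a suitable function class I would define, for each $i$ and each $(x,t)\in\mathcal D$, the characteristic $\zeta_i(\cdot;x,t)$ by
\[
\tfrac{d}{ds}\zeta_i(s)=\lambda_i\bigl(\zeta_i(s),s,v(\zeta_i(s),s)\bigr),\qquad \zeta_i(t)=x,
\]
which is well posed by Picard--Lindel\"of since $\lambda_i$ is $C^1$, and then set
\[
(\mathcal T v)_i(x,t):=\bar u_i\bigl(\zeta_i(0)\bigr)+\int_0^t h_i\bigl(\zeta_i(s),s,v(\zeta_i(s),s)\bigr)\,ds.
\]
A fixed point of $\mathcal T$ is exactly a solution of \eqref{quasilinear}--\eqref{ID} on $\mathcal D$. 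I would run the argument on the complete metric space $X$ of continuous maps $v:\mathcal D\to\mathbb R^n$ that satisfy $\|v-\bar u\|_\infty\le\delta$, keep $(x,t,v)\in K$, and are Lipschitz in $x$ with constant $\le 2M_0$; the last constraint is preserved under uniform limits, so $X$ is closed in the sup norm. A first batch of estimates, using the bounds on $\lambda_i,h_i$ and Gronwall's inequality for the flow $\zeta_i$, shows $\mathcal T(X)\subset X$ for $T$ small; a second batch, comparing the characteristics of two inputs $v,\tilde v$, gives $\|\mathcal T v-\mathcal T\tilde v\|_\infty\le C\,T\,\|v-\tilde v\|_\infty$, so $\mathcal T$ is a contraction once $CT<1$. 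Banach's theorem then yields a unique fixed point $u\in X$, and the same comparison estimate applied to two solutions proves uniqueness in the classical class.

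The main obstacle is upgrading this Lipschitz solution to a genuine $C^1$ solution, since both the data and the coefficients are only one step smooth: the hypothesis that $A,h$ have \emph{locally Lipschitz} first derivatives is exactly what is needed here, and no more is available. Differentiating the equation formally, $w_i:=\partial_x u_i$ should satisfy the transport equation
\[
\partial_t w_i+\lambda_i\,\partial_x w_i=\partial_x h_i+\sum_j\partial_{u_j}h_i\,w_j-\Bigl(\partial_x\lambda_i+\sum_j\partial_{u_j}\lambda_i\,w_j\Bigr)w_i,
\]
whose right-hand side is quadratic (Riccati-type) in $w$ but involves only the first derivatives of $\lambda_i,h_i$. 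I would therefore run a parallel characteristic/Gronwall argument for the pair $(u,w)$: show that the corresponding integrated map is again a contraction in the sup norm for small $T$ (the Riccati term is controlled because $w$ stays bounded on a short time interval), so that $w$ is continuous, and then verify, by differentiating $\mathcal T$ through the flow $\zeta_i$ and identifying $\partial_x\zeta_i$, that the fixed point really has $\partial_x u=w$. Continuity of $w$ together with the equation then gives continuity of $\partial_t u$, and hence $u\in C^1(\mathcal D)$. The two delicate points, which are the only places where the Lipschitz-derivative hypothesis is used in an essential way, are the contraction estimate for the $w$-component and the justification that the limiting $w$ is the classical spatial derivative of $u$.
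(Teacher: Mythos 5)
Your first stage (the contraction for $u$ itself) is sound and standard: $\mathcal T$ is a contraction in the sup norm on a ball of uniformly Lipschitz functions, and the same comparison estimate gives uniqueness; the paper reaches the analogous conclusions by a Picard-type iteration on frozen-coefficient semilinear problems and a recurrence lemma rather than Banach's theorem, which is only a cosmetic difference. The genuine gap is in the $C^1$ upgrade, at exactly the two points you flag but do not resolve. First, the integrated map for the pair $(v,\omega)$ is \emph{not} a contraction in the sup norm when $\bar u$ is merely $C^1$: the fixed-point map for $\omega$ contains the superposition terms $\bar u'\bigl(\zeta_i(0;x,t)\bigr)$ and $\omega\bigl(\zeta_i(s;x,t),s\bigr)$, and the characteristics $\zeta_i$ depend on the input $v$. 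Since $\bar u'$ and $\omega$ are only continuous, not Lipschitz (that would require $C^{1,1}$ data, which assumption (A) does not provide), perturbing $v$ changes these terms only by a modulus of continuity $\sigma\bigl(C\,\|v-\tilde v\|_\infty\bigr)$, not by $C\,T\,\|v-\tilde v\|_\infty$; no smallness of $T$ turns this into a contraction. Restricting $\omega$ to a class with a fixed modulus of continuity would require proving that the map preserves such a class --- precisely the equicontinuity propagation the paper carries out in its step 4 --- and even then what one gets is compactness (Arzel\`a--Ascoli), not a contraction, so the argument necessarily mutates into the paper's scheme.

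Second, your identification step (``differentiating $\mathcal T$ through the flow $\zeta_i$ and identifying $\partial_x\zeta_i$'') is circular: differentiability of the flow $x\mapsto\zeta_i(s;x,t)$ and the variational equation for $\partial_x\zeta_i$ require the vector field $x\mapsto\lambda_i\bigl(x,s,u(x,s)\bigr)$ to be $C^1$ in $x$, i.e., require $u_x$ to exist and be continuous --- the very conclusion being sought. At that stage your fixed point $u$ is only Lipschitz, so it is a broad solution and nothing more. The paper avoids this trap by never linearizing around a merely Lipschitz function: each iterate $u^{(\nu)}$ solves a \emph{semilinear} problem with the $C^1$ function $u^{(\nu-1)}$ frozen in the coefficients, so the semilinear theory (Bressan, Theorem 3.6; Douglis) makes each $u^{(\nu)}$ honestly $C^1$, and its derivatives are genuine broad solutions of the linearized Riccati-type equations; uniform bounds via the comparison function $\bar Y$, the equicontinuity of $u^{(\nu)}_x$, and Arzel\`a--Ascoli then deliver $u\in C^1$ in the limit. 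To repair your proof you would either need to strengthen the hypothesis to $\bar u'$ Lipschitz (then your pair-contraction does close), or mollify the data and re-derive uniform derivative bounds together with a uniform modulus of continuity for the derivatives --- which is the paper's step 4 in disguise.
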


\begin{proof}
To get the solution, we consider the sequence $u^{(\nu)}:\mathcal{D}\rightarrow\mathbb{R}^n$ such that $u^{(0)}(x,0)=\bar{u}(x)$ and, for $\nu\ge 1$, $u^{(\nu)}=(u^{(\nu)}_1,\cdots,u^{(\nu)}_n)$ is defined inductively by the solution of the semilinear problem
\refstepcounter{equation}
\begin{equation}
\label{def-u}\tag*{(\theequation)$_{\nlab}$}
u^{(\nu)}_{i,t}+\lambda_i(x,t,u^{(\nu-1)})u^{(\nu)}_{i,x}=h_i(x,t,u^{(\nu-1)}),\quad u^{(\nu)}(x,0)=\bar{u}(x),\quad i=1,\cdots,n, 
\end{equation}
where $h_i$ is the $i$-th component of $h$. 

1. We set
\begin{equation}
\label{C0}
C_0:=\max_{x\in [a,b]}|\bar{u}(x)|
\end{equation}
and choose $\Lambda$ in \eqref{D} to be
\begin{equation*}
\Lambda=\max\{|\lambda_i(x,t,u)|:\ t\in [0,1],\ x\in [a,b],\ |u|\le C_0+1,\ i=1,\cdots,n\}.
\end{equation*}
Under the choice of $\Lambda$, the set $\mathcal{D}$ becomes a domain of determinacy for \ref{def-u} provided that
\refstepcounter{equation}
\begin{equation}
\label{determinacy}\tag*{(\theequation)$_{\nlab}$}
T\le 1,\qquad |u^{(\nu-1)}(x,t)|\le C_0+1\quad\text{for}\ (x,t)\in\mathcal{D}.
\end{equation}
With the assumption (A), one can choose constants $C_1$ and $C_2$ such that
\begin{equation}
\label{ubar}
|\bar{u}'(x)|\le C_1,\quad |h(x,0,\bar{u}(x))-A(x,0,\bar{u}(x))\bar{u}'(x)|\le C_1\qquad\text{for all}\ x\in [a,b]
\end{equation}
and
\begin{equation}
\label{h_lambda}
|h_x|\le C_2,\quad |h_t|\le C_2,\quad |h_u|+|A_x|\le C_2,\quad |h_u|+|A_t|\le C_2,\quad |A_u|\le C_2    
\end{equation}
for all $t\in [0,1]$, $x\in [a,b]$, $|u|\le C_0+1$.

Let $\bar{Y}$ be the solution of the ODE
\begin{equation*}
Y'=C_2(1+nY)^2,\qquad Y(0)=C_1.
\end{equation*}
Then $\bar{Y}$ can be solved explicitly by
\begin{equation*}
\bar{Y}(t)=\frac{1}{n}\left(\frac{1}{(1+nC_1)^{-1}-nC_2t}-1\right)\quad\text{for all}\ t\in\left[0,\frac{1}{nC_2(1+nC_1)}\right).
\end{equation*}
We now choose $T>0$ small enough such that
\begin{equation}
\label{delta}
\int^{T}_{0}n\bar{Y}(t)dt\le 1.
\end{equation}

2. We now prove by induction that \ref{determinacy} holds together with 
\refstepcounter{equation}
\begin{equation}
\label{Lip}\tag*{(\theequation)$_{\nlab}$}
|u^{(\nu)}_x(x,t)|\le n\bar{Y}(t), \quad |u^{(\nu)}_t(x,t)|\le n\bar{Y}(t)\qquad\text{for all}\ (x,t)\in\mathcal{D},\ \nu\in\mathbb{N}\cup\{0\}.
\end{equation}
If \ref{determinacy} and \ref{Lip} holds for all $\nu\ge 0$, then we will prove the following:
\begin{itemize}
\item[(a)] $\mathcal{D}$ serves as a universal domain of determinacy for all problems \ref{def-u}.
\item[(b)] The Lipschitz constant of the functions $u^{(\nu)}$ is uniformly bounded on $\mathcal{D}$.
\end{itemize}
\ref{Lip} is obvious for $\nu=0$. Suppose that {\renewcommand\nlab{\nu-1}\ref{Lip}} is true. Then, by \eqref{C0} and \eqref{delta}, we have
\begin{equation}
\label{u^nu-1}
\aligned
|u^{(\nu-1)}(x,t)|&\le |u^{(\nu-1)}(x,0)|+\int^t_0 |u^{(\nu-1)}_t(x,s)|ds\\
&\le |\bar{u}(x)|+\int^t_0 n\bar{Y}(s) ds\le C_0+1\qquad\text{for}\ (x,t)\in\mathcal{D},
\endaligned
\end{equation}
which proves \ref{determinacy}. It follows from \cite[Theorem 3.6]{B} (see also \cite[Theorem 6]{D}) that problem \ref{def-u} admits a classical solution $u^{(\nu)}$ on $\mathcal{D}$. Moreorver, if we write $\mathfrak{v}:=u^{(\nu)}_x$, $\mathfrak{w}:=u^{(\nu)}_t$, and $\mathfrak{v}_i$, $\mathfrak{w}_i$ denote the $i$-th component of $\mathfrak{v}$, $\mathfrak{w}$ respectively, then they will be {\it broad solutions} (see \cite[p. 48]{B} for its definition) for the following systems
\begin{equation*}
\aligned
\mathfrak{v}_{i,t}+\lambda_i\left(x,t,u^{(\nu-1)}(x,t)\right)\mathfrak{v}_{i,x}
&=h_{i,x}+h_{i,u}\cdot u^{(\nu-1)}_x-\left(\lambda_{i,x}+\lambda_{i,u}\cdot u^{(\nu-1)}_x\right)\mathfrak{v},\quad i=1,\cdots,n,\\
\mathfrak{w}_{i,t}+\lambda_i\left(x,t,u^{(\nu-1)}(x,t)\right)\mathfrak{w}_{i,x}
&=h_{i,t}+h_{i,u}\cdot u^{(\nu-1)}_t-\left(\lambda_{i,t}+\lambda_{i,u}\cdot u^{(\nu-1)}_t\right)\mathfrak{w},\quad i=1,\cdots,n,
\endaligned
\end{equation*}
which implies
\begin{equation}
\label{u_x}
\frac{d}{d\tau}\left\{ \mathfrak{v}_{i}(x^{(\nu-1)}_i(\tau;x,t),\tau)\right\}
=h_{i,x}+h_{i,u}\cdot u^{(\nu-1)}_x-\left(\lambda_{i,x}+\lambda_{i,u}\cdot u^{(\nu-1)}_x\right)\mathfrak{v},\quad i=1,\cdots,n,
\end{equation}
\begin{equation}
\label{u_t}
\frac{d}{d\tau}\left\{ \mathfrak{w}_{i}(x^{(\nu-1)}_i(\tau;x,t),\tau)\right\}
=h_{i,t}+h_{i,u}\cdot u^{(\nu-1)}_t-\left(\lambda_{i,t}+\lambda_{i,u}\cdot u^{(\nu-1)}_t\right)\mathfrak{w},\quad i=1,\cdots,n,
\end{equation}
where $\tau\mapsto x^{(\nu-1)}_i(\tau;x,t)$ denotes the $i$-th characteristic curve related to $u^{(\nu)}$ passing through $(x,t)$; more precisely, $x^{(\nu-1)}_i(\tau;x,t)$ is the solution of 
\begin{equation}
\label{characteristics-1}
\frac{dX}{d\tau}=\lambda_i\left(X,\tau,u^{(\nu-1)}(X,\tau)\right),\qquad X(t)=x.
\end{equation}
Define
\begin{equation*}
Y(\tau):=\max\{|\mathfrak{v}_{i}(X,\tau)|,|\mathfrak{w}_{i}(X,\tau)|:\ X\in [a+\Lambda\tau,b-\Lambda\tau],\ i=1,\cdots,n\}.
\end{equation*}
In view of \eqref{ubar}-\eqref{h_lambda}, \eqref{u^nu-1}-\eqref{u_t}, and the induction hypothesis, we obtain that
\begin{equation*}
Y'(\tau)\le C_2[1+n\bar{Y}(\tau)+n^2\bar{Y}(\tau)Y(\tau)]<\bar{Y}'(\tau),\qquad Y(0)\le C_1,
\end{equation*}
provided that $Y(\tau)\le \bar{Y}(\tau)$. A simple comparison argument gives that $Y(\tau)\le \bar{Y}(\tau)$ for all $\tau\in [0,T]$. Thus, by induction, we complete the proof of \ref{Lip}.

3. Next, we show the uniform convergence of the sequence $u^{(\nu)}$ on $\mathcal{D}$. For simplicity, we write
\begin{equation}
\label{notations}
\mathfrak{u}^\nu=u^{(\nu)}-u^{(\nu-1)},\quad \lambda^{\nu-1}_i(x,t)=\lambda_i\left(x,t,u^{(\nu-1)}(x,t)\right),\quad 
h^{\nu-1}_i(x,t)=h_i\left(x,t,u^{(\nu-1)}(x,t)\right).
\end{equation}
From {\renewcommand\nlab{}\ref{def-u}}, we get that
\begin{equation*}
\mathfrak{u}^{\nu}_{i,t}+\lambda^{\nu-1}_i \mathfrak{u}^{\nu}_{i,x}=h^{\nu-1}_i-h^{\nu-2}_i-(\lambda^{\nu-1}_i-\lambda^{\nu-2}_i)u^{(\nu-1)}_{i,x},\quad i=1,\cdots,n,
\end{equation*}
and then
\begin{equation}
\label{diff-u-2}
\frac{d}{d\tau}\left\{ \mathfrak{u}^{\nu}_{i}(x^{(\nu-1)}_i(\tau;x,t),\tau)\right\}=h^{\nu-1}_i-h^{\nu-2}_i-(\lambda^{\nu-1}_i-\lambda^{\nu-2}_i)u^{(\nu-1)}_{i,x},\quad i=1,\cdots,n.
\end{equation}
From the assumption (A), there exists a constant $C_3$ such that
\begin{equation*}
|A(x,t,u)-A(x,t,u')|\le C_3|u-u'|,\qquad |h(x,t,u)-h(x,t,u')|\le C_3|u-u'| 
\end{equation*}
provided that $(x,t)\in\mathcal{D}$, $|u|,|u'|\le C_0+1$. On the other hand, the bounds {\renewcommand\nlab{}\ref{Lip}} say that there exists a constant $C_4$ such that $|u^{(\nu)}_x|\le C_4$ for $(x,t)\in\mathcal{D}$ and $\nu\in\mathbb{N}\cup\{0\}$. We now consider the function
\begin{equation*}
Z_\nu(\tau):=\max\{|\mathfrak{u}^{\nu}_{i}(X,\tau)|:\ X\in [a+\Lambda\tau,b-\Lambda\tau],\ i=1,\cdots,n\}.
\end{equation*}
Then \eqref{diff-u-2} yields that
\begin{equation}
\label{Z_ineq}
Z'_\nu(\tau)\le C_3 n Z_{\nu-1}(\tau)+[C_3 n Z_{\nu-1}(\tau)]C_4,\quad Z_\nu(0)=0.
\end{equation}
Then the uniform convergence of the series $\sum Z_\nu(\tau)$, for $\tau\in [0,T]$, follows directly from the following lemma. Therefore, the sequence $u^{(\nu)}$ is uniformly convergent on $\mathcal{D}$.

\begin{lem}
Let $\{Z_\nu(\tau)\}_{\nu\ge 0}$ be a sequence of continuous, non-negative functions satisfying $Z_0(\tau)\le \bar{Z}e^{\alpha\tau}$. Suppose that $\alpha>0$ and $\beta,\bar{Z}\ge 0$. If $Z_\nu(\tau)$, $\nu\ge 1$, satisfy the following recurrent inequality
\begin{equation}
\label{recurrent}
Z_\nu(\tau)\le \int^\tau_0[\alpha Z_\nu(\eta)+\beta Z_{\nu-1}(\eta)]d\eta\qquad\text{for}\ \nu\ge 1,
\end{equation}
then, for $\nu\ge 1$,
\begin{equation}
\label{conv}
Z_\nu(\tau)\le\frac{(\beta \tau)^{\nu}e^{\alpha \tau}}{\nu!}\bar{Z}\quad\text{and}\quad \sum^\infty_{\nu=0}Z_{\nu}(\tau)=e^{(\alpha+\beta)\tau}\bar{Y}.
\end{equation}
\end{lem}

\noindent{\it Proof.}
Let
\begin{equation*}
W_\nu(\tau):=\int^\tau_0[\alpha Z_\nu(\eta)+\beta Z_{\nu-1}(\eta)]d\eta.
\end{equation*}
From \eqref{recurrent}, we deduce
\begin{equation*}
W'_\nu(\tau)=\alpha Z_\nu(\tau)+\beta Z_{\nu-1}(\tau)\le \alpha W_\nu(\tau)+\beta Z_{\nu-1}(\tau).
\end{equation*}
Multiplying $e^{-\alpha \tau}$ and integrating the resultant inequality on $[0,\tau]$, we have
\begin{equation*}
W_\nu(\tau)\le \beta\int^{\tau}_0 e^{\alpha(\tau-\eta)}Z_{\nu-1}(\eta)d\eta,
\end{equation*}
which together with \eqref{recurrent} gives that
\begin{equation*}
Z_\nu(\tau)\le \beta\int^{\tau}_0 e^{\alpha(\tau-\eta)}Z_{\nu-1}(\eta)d\eta.
\end{equation*}
Then one can easily obtain \eqref{conv} by induction on $\nu$.\hskip 1cm\qedsymbol

4. We now prove that $u^{(\nu)}_x$ is equicontinous on $\mathcal{D}$. Since $u^{(\nu)}_x$ and $u^{(\nu)}_t$ are uniformly bounded on $\mathcal{D}$ by {\renewcommand\nlab{}\ref{Lip}}, there exists a $C_5>0$ independent of $\nu$ such that
\begin{equation}
\label{equi_u}
|u^{(\nu)}(x,t)-u^{(\nu)}(x',t')|<C_5\delta_1\quad\text{if}\ |(x,t)-(x',t')|<\delta_1\ \text{and}\ (x,t),(x',t')\in \mathcal{D}
\end{equation}
for all $\delta_1>0$. 
Let $\varepsilon>0$ be given, we choose $0<\delta_2\le\delta_1/C_6\le\varepsilon/(4nC_7)$ and $(x,t)$, $(x',t')$ are any two fixed points in $\mathcal{D}$ such that $|(x,t)-(x',t')|<\delta_2$, where $C_6, C_7$ are constants determined later. By \eqref{characteristics-1}, we have
\begin{equation}
\label{characteristics-2}
\frac{d}{d\tau}x^{(\nu-1)}_i(t-\tau;x,t)=-\lambda^{\nu-1}_i\left(x^{(\nu-1)}_i(t-\tau;x,t),t-\tau\right)=:-\lambda_i,
\end{equation}
and
\begin{equation}
\label{characteristics-3}
\frac{d}{d\tau}x^{(\nu-1)}_i\left(t'-\frac{t'}{t}\tau;x',t'\right)
=-\frac{t'}{t}\lambda^{\nu-1}_i\left(x^{(\nu-1)}_i\left((t'-\frac{t'}{t}\tau;x',t'\right),t'-\frac{t'}{t}\tau\right)=:-\frac{t'}{t}\lambda'_i.
\end{equation}
For any fixed $\nu$, we use $x_i(\tau)$ and $x'_i(\tau)$ to denote $x^{(\nu-1)}_i(\tau;x,t)$ and $x^{(\nu-1)}_i(\tau'(\tau);x',t')$ for simplicity, where $\tau'(\tau):=\frac{t'}{t}\tau$.
Substracting \eqref{characteristics-3} from \eqref{characteristics-2}, we get that
\begin{equation}
\label{x-diff}
\frac{d}{d\tau}(x_i-x'_i)(t-\tau)=(\lambda'_i-\lambda_i)+\frac{t'-t}{t}\lambda'_i.
\end{equation}
The Lipschitz continuity of $\lambda_i$ and \eqref{equi_u} yields that
\begin{equation}
\label{lambda-diff}
|\lambda'_i-\lambda_i|\le C_8\left(|(x_i-x'_i)(t-\tau)|+|t-t'|\right),
\end{equation}
for some constant $C_8$. Applying the differential inequality on \eqref{x-diff}-\eqref{lambda-diff}, we have that there exists a constant chosen as $C_6$ such that
\begin{equation}
\label{x-diff-1}
|(x_i(\tau),\tau)-(x'_i(\tau),\tau'(\tau))|\le C_6\delta_2\le\delta_1\quad\text{for}\ \tau\in [0,t].
\end{equation}
And \eqref{x-diff-1} holds for any two points $(x,t)$, $(x',t')\in\mathcal{D}$ such that $|(x,t)-(x',t')|<\delta_2$ if $T$ is a sufficiently small constant independent of $\nu$ and $\delta_2$.

We now prove by induction that, if there exists a $\delta_3>0$ such that
\begin{equation}
\label{bar_u}
|\bar{u}'(x)-\bar{u}'(x')|\le\frac{\varepsilon}{2}\quad\text{for}\ |x-x'|<\delta_3\ \text{and}\ x,x'\in [a,b],
\end{equation}
then, for every $\nu\ge 0$, we have
\refstepcounter{equation}
\begin{equation}
\label{equi_nu}\tag*{(\theequation)$_{\nlab}$}
|u^{(\nu)}_x(x,t)-u^{(\nu)}_x(x',t')|<\varepsilon\quad\text{if}\ |(x,t)-(x',t')|<\delta_2\le \frac{\delta_3}{C_6}\ \text{and}\ (x,t),(x',t')\in \mathcal{D}
\end{equation}
provided that $T$ is small enough. \eqref{bar_u} follows directly from the assumption (A). In addition, it is easy to see that if \ref{equi_nu} holds, then we also have
\refstepcounter{equation}
\begin{equation}
\label{equi_nu_2}\tag*{(\theequation)$_{\nlab}$}
|u^{(\nu)}_x(x,t)-u^{(\nu)}_x(x',t')|<([C_7]+1)\varepsilon\quad\text{if}\ |(x,t)-(x',t')|<C_7\delta_2\ \text{and}\ (x,t),(x',t')\in \mathcal{D},
\end{equation}
where $[\cdot]$ denotes the greatest integer function.
\ref{equi_nu} for the case $\nu=0$ is obvious. We assume that {\renewcommand\nlab{\nu-1}\ref{equi_nu}} holds. To show that \ref{equi_nu} is also true, we write $v^{\nu}(x,t)=u^{(\nu)}_x(x,t)$ for simplicity. From \ref{def-u}, we have
\begin{equation}
\label{v_eq}
v^{\nu}_{i,t}+\lambda^{\nu-1}_i v^{\nu}_{i,x}=h^{\nu-1}_{i,x}+h^{\nu-1}_{i,u}\cdot v^{\nu-1}-\lambda^{\nu-1}_{i,x}v^{\nu}_i-\left(\lambda^{\nu-1}_{i,u}\cdot v^{\nu-1}\right) v^{\nu}_i,
\end{equation}
where $\lambda^{\nu-1}_i$ and $h^{\nu-1}_{i}$ are defined in \eqref{notations}. We obtain from \eqref{v_eq} that
\begin{equation}
\label{v_eq_x}
\frac{d}{d\tau}v^{\nu}_i(x_i(\tau),\tau)=\left\{ h^{\nu-1}_{i,x}+h^{\nu-1}_{i,u}\cdot v^{\nu-1}-\lambda^{\nu-1}_{i,x}v^{\nu}_i-\left(\lambda^{\nu-1}_{i,u}\cdot v^{\nu-1}\right) v^{\nu}_i\right\}(x_i(\tau),\tau).
\end{equation}
Since $\tau'(\tau)=\frac{t'}{t}\tau$ and
\begin{equation*}
(x'_i(\tau),\tau'(\tau))=\left(x_i\left(\frac{t'}{t}\tau\right),\frac{t'}{t}\tau\right)=(x_i(\tau'),\tau'),  
\end{equation*}
we also have
\begin{equation}
\label{v_eq_x'}
\aligned
\frac{d}{d\tau}v^{\nu}_i(x'_i(\tau),\tau'(\tau))&=\frac{t'}{t}\frac{d}{d\tau'}v^{\nu}_i(x_i(\tau'),\tau')\\
&=\frac{t'}{t}\left\{ h^{\nu-1}_{i,x}+h^{\nu-1}_{i,u}\cdot v^{\nu-1}-\lambda^{\nu-1}_{i,x}v^{\nu}_i-\left(\lambda^{\nu-1}_{i,u}\cdot v^{\nu-1}\right) v^{\nu}_i\right\}(x'_i(\tau),\tau'(\tau)).
\endaligned
\end{equation}
Let
\begin{equation}
\label{w}
w^{\nu}(\tau):=v^{\nu}(x_i(\tau),\tau)-v^{\nu}(x'_i(\tau),\tau'(\tau)).
\end{equation}
By \eqref{v_eq_x}-\eqref{w}, we get that
\begin{equation}
\label{w_eq}
\aligned
\frac{d}{d\tau}w^{\nu}_i(\tau)&=\Big\{\left[ h^{\nu-1}_{i,x}+h^{\nu-1}_{i,u}\cdot v^{\nu-1}-\lambda^{\nu-1}_{i,x}v^{\nu}_i-\left(\lambda^{\nu-1}_{i,u}\cdot v^{\nu-1}\right) v^{\nu}_i\right](x_i(\tau),\tau)\\
&\quad -\left[ h^{\nu-1}_{i,x}+h^{\nu-1}_{i,u}\cdot v^{\nu-1}-\lambda^{\nu-1}_{i,x}v^{\nu}_i-\left(\lambda^{\nu-1}_{i,u}\cdot v^{\nu-1}\right) v^{\nu}_i\right](x'_i(\tau),\tau'(\tau))\Big\}\\
&\quad +\frac{t-t'}{t}\left\{ h^{\nu-1}_{i,x}+h^{\nu-1}_{i,u}\cdot v^{\nu-1}-\lambda^{\nu-1}_{i,x}v^{\nu}_i-\left(\lambda^{\nu-1}_{i,u}\cdot v^{\nu-1}\right) v^{\nu}_i\right\}(x'_i(\tau),\tau'(\tau))\\
&=:I_1+I_2.
\endaligned
\end{equation}
The $C^1$ continuities of $h$, $\lambda$, and {\renewcommand\nlab{}\ref{determinacy}} give that there exists a constant chosen as $C_7$ such that
\begin{equation}
\label{I2-est}
|I_2|\le \frac{C_7|t-t'|}{t}\le \frac{C_7\delta_2}{t}\le \frac{\varepsilon}{4nt}.
\end{equation}
On the other hand, we write $I_1$ as
\begin{equation}
\label{I1}
I_1=J_1+J_2+J_3+J_4+J_5+J_6+J_7+J_8,
\end{equation}
where
\begin{equation*}
\aligned
J_1&:=\left\{h^{\nu-1}_{i,x}(x_i(\tau),\tau)-h^{\nu-1}_{i,x}(x'_i(\tau),\tau'(\tau))\right\},\\
J_2&:=\left\{h^{\nu-1}_{i,u}(x_i(\tau),\tau)-h^{\nu-1}_{i,u}(x'_i(\tau),\tau'(\tau))\right\}\cdot v^{\nu-1}(x_i(\tau),\tau),\\
J_3&:=h^{\nu-1}_{i,u}(x'_i(\tau),\tau'(\tau))\cdot w^{\nu-1}(\tau),\\
J_4&:=-\left\{\lambda^{\nu-1}_{i,x}(x_i(\tau),\tau)-\lambda^{\nu-1}_{i,x}(x'_i(\tau),\tau'(\tau))\right\}v^{\nu}_i(x_i(\tau),\tau),\\
J_5&:=-\lambda^{\nu-1}_{i,x}(x'_i(\tau),\tau'(\tau))w^{\nu}_i(\tau),\\
J_6&:=-\left\{\left(\lambda^{\nu-1}_{i,u}(x_i(\tau),\tau)-\lambda^{\nu-1}_{i,u}(x'_i(\tau),\tau'(\tau))\right)\cdot v^{\nu-1}(x_i(\tau),\tau)\right\} v^{\nu}_i(x_i(\tau),\tau),\\
J_7&:=-\left\{\lambda^{\nu-1}_{i,u}(x'_i(\tau),\tau'(\tau))\cdot w^{\nu-1}(\tau)\right\} v^{\nu}_i(x_i(\tau),\tau),\\
J_8&:=-\left\{\lambda^{\nu-1}_{i,u}(x'_i(\tau),\tau'(\tau))\cdot v^{\nu-1}(x'_i(\tau),\tau'(\tau))\right\} w^{\nu}_i(\tau).
\endaligned
\end{equation*}
Applying {\renewcommand\nlab{}\ref{determinacy}}, \eqref{equi_u}, \eqref{x-diff-1}, {\renewcommand\nlab{\nu-1}\ref{equi_nu_2}}, and the assumption (A), we obtain that
\begin{equation}
\label{est1}
|J_1|,|J_2|,|J_4|,|J_6|\le C_9(\delta_2+\delta_1)\le C_{10}\varepsilon,\qquad
|J_3|,|J_7|\le C_{10}\varepsilon,.
\end{equation}
for some constants $C_9,C_{10}$. The uniform boundedness of $u^{(\nu)}$ and $u^{(\nu)}_x$ gives that
\begin{equation}
\label{est2}
|J_5|,|J_8|\le C_{11}|w^{\nu}_i|,
\end{equation}
for some constant $C_{11}$. Using the differential inequality on \eqref{w_eq}-\eqref{est2} together with \eqref{bar_u}, we have 
\begin{equation*}
|u^{(\nu)}_{i,x}(x,t)-u^{(\nu)}_{i,x}(x',t')|=|w^{(\nu)}_i(t)|\le \frac{\varepsilon}{n}\quad\text{for}\ t\in [0,T]
\end{equation*}
if $T$ is small enough, which is valid for $i=1,\cdots,n$. 
Therefore, \ref{equi_nu} is also true. By induction, we get the equicontinuity of $u^{(\nu)}_x$.

5. We are in a position to prove the local existence of classical solution for problem \eqref{quasilinear}, \eqref{ID}. Since $u^{(\nu)}$ is uniformly convergent on $\mathcal{D}$, we let
\begin{equation}
\label{u}
u(x,t):=\lim_{\nu\rightarrow\infty}u^{(\nu)}(x,t)\quad\text{for}\ (x,t)\in\mathcal{D}.
\end{equation}
On the other hand, since $u^{(\nu)}_x$ is uniformly bounded and equicontinuous on $\mathcal{D}$, the Arzela-Ascoli theorem says that there exists a subsequence $\{\nu_k\}$ of $\mathbb{N}$ such that $u^{(\nu_k)}_x$ is uniformly convergent on $\mathcal{D}$. Thus, we get that
\begin{equation}
\label{u_x,u_t}
u_x(x,t)=\lim_{k\rightarrow\infty}u^{(\nu_k)}_x(x,t)
\end{equation}
exists and is continuous on $\mathcal{D}$. From {\renewcommand\nlab{}\ref{def-u}} and the Lipschitz continuity of $h_i$, $\lambda_i$, we find that
\begin{equation}
\label{lim_u^nu_t}
\aligned
\lim_{k\rightarrow\infty}u^{(\nu_k)}_t(x,t)&=h_i\left(x,t,\lim_{k\rightarrow\infty}u^{(\nu_k)}\right)-\lambda_i\left(x,t,\lim_{k\rightarrow\infty}u^{(\nu_k)}\right)\lim_{k\rightarrow\infty}u^{(\nu_k)}_x(x,t)\\
&=h_i(x,t,u)-\lambda_i(x,t,u)u_x.
\endaligned
\end{equation}
Since the convergence in \eqref{lim_u^nu_t} is uniform with respect to $x$ and $t$, we have
\begin{equation}
\label{def_u_t}
\lim_{k\rightarrow\infty}u^{(\nu_k)}_t(x,t)=\left(\lim_{k\rightarrow\infty}u^{(\nu_k)}(x,t)\right)_t=u_t(x,t)
\end{equation}
exists is continuous on $\mathcal{D}$. Combining \eqref{lim_u^nu_t} and \eqref{def_u_t}, we prove that $u$ is a classical solution for problem \eqref{quasilinear}, \eqref{ID} on $\mathcal{D}$.
The uniqueness of classical solution problem \eqref{quasilinear}, \eqref{ID} follows directly from \cite[Theorem 1]{D}.
\end{proof}


\vskip 1cm


\end{document}